\documentclass[10pt]{amsart}

\usepackage{amssymb}
\usepackage{amsthm}
\usepackage{amsmath}

\newtheorem{theorem}{Theorem}[section]
\newtheorem{cor}[theorem]{Corollary}
\newtheorem{lemma}[theorem]{Lemma}

\newtheorem{defn}[theorem]{Definition}

\newcommand{\FF}[1]{\mathbb{F}_{#1}}

\begin{document}

\title{A new recursive construction for large sets of Kirkman triple systems}
\author{Chen Wang}
\address[Wang]{Johannes Kepler University Linz}
\email{chen.wang.1@outlook.com}

\begin{abstract}
We give a recursive construction of an LKTS$(7q+2)$ from an LKTS$(q+2)$
and an auxiliary permutation structure on an abelian group of order $q$, which
we call a cubic orthomorphism. We prove that cubic orthomorphisms exist whenever
$q$ is a product of prime powers congruent to $1$ modulo $6$. The construction
yields, in particular, LKTSs of orders $93,177,219,261$, and $303$.
\end{abstract}

\maketitle

\noindent\textbf{Keywords:} Large set, Steiner triple system, Kirkman triple system

\section{Introduction}
A \emph{Steiner triple system} of order $v$, denoted by STS$(v)$, is a pair $(X,\mathcal{B})$, where $X$ is a $v$-set and $\mathcal{B}$ is a collection of triples of $X$, called \emph{blocks}, such that every pair of points of $X$ is contained in a unique block of $\mathcal{B}$. Let $(X,\mathcal{B})$ be an STS$(v)$. If there exists a partition
\[
\mathcal{A}=\{P_1,P_2,\ldots,P_{(v-1)/2}\}
\]
of $\mathcal{B}$ such that every $P_i$ is a parallel class, that is, a partition of $X$, then the STS$(v)$ is called \emph{resolvable}, and $\mathcal{A}$ is called a \emph{resolution}. A resolvable STS$(v)$ is usually called a \emph{Kirkman triple system} of order $v$, briefly a KTS$(v)$. It is well known that a KTS$(v)$ exists if and only if $v\equiv3\pmod 6$.

Two STS$(v)$s on the same point set are said to be \emph{disjoint} if they have no blocks in common. By a simple counting argument, there can be at most $v-2$ mutually disjoint STS$(v)$s on the same point set. A set of $v-2$ mutually disjoint STS$(v)$s necessarily partitions all triples of the point set and is called a \emph{large set} of Steiner triple systems, denoted by LSTS$(v)$. A large set of Kirkman triple systems of order $v$, denoted by LKTS$(v)$, is an LSTS$(v)$ in which every constituent STS$(v)$ is resolvable.

The existence problem for LKTSs was posed by Sylvester in connection with Kirkman's schoolgirl problem. The existence problem for LSTSs was completely solved by Lu~\cite{MR692824,MR692825,MR692826,MR757612} and Teirlinck~\cite{MR1111565}, but the corresponding problem for LKTSs remains far from settled. Direct constructions are known only for a relatively sparse collection of orders. Table~\ref{tab:direct} is based on the survey of Chang and Zhou~\cite{MR3007142}, with the later additions from~\cite{MR3613568,MR3696119}.

\begin{table}[ht]
\centering
\caption{Known direct constructions of LKTSs.}
\label{tab:direct}
\begin{tabular}{p{3.7cm}rc}
\hline
Order & Discoverer & Year \\
\hline
$9$ & Kirkman~\cite{LKTS9} & 1850 \\
$15$ & Denniston~\cite{MR0369086} & 1974 \\
$33$ & Schreiber & 1974 \\
$51,75,105,129$ & Denniston~\cite{MR539718} & 1979 \\
$201,369$ & Chang and Ge~\cite{MR1675197} & 1999 \\
$273$ & Ge~\cite{MR2054972} & 2004 \\
$21,39$ & Zhou and Chang~\cite{MR2593325} & 2010 \\
$69,141,165,213,285$, & Zheng, Chang and Zhou~\cite{MR3613568} & 2016 \\
$309,333$ & & \\
$111,123,159,183,195$, & Zheng, Chang and Zhou~\cite{MR3696119} & 2017 \\
$231,279,291,339,381$ & & \\
\hline
\end{tabular}
\end{table}

The first recursive construction for LKTSs was a tripling construction of Denniston~\cite{MR535159}. It was later generalized to a product construction by Lei~\cite{MR1931492}, and the hypotheses of these constructions were weakened by Zhang and Zhu~\cite{MR1893029,MR1948399}. A different type of recursive construction was given by Wang and Shi~\cite{MR3612438}, who constructed an LKTS$(q^n+2)$ from an LKTS$(q+2)$ for suitable prime powers $q$. Further recursive constructions based on Steiner quadruple systems with resolvable derived designs and related structures were obtained by Xu and Ji~\cite{XuJi2021}, Liu and Lei~\cite{LiuLei2023}, and Tan and Zhou~\cite{TanZhou2025}.

The purpose of the present paper is to give another non-product construction. We introduce an auxiliary structure, called a cubic orthomorphism, on an abelian group of order $q$. Our main result, Theorem~\ref{thm:main}, states that the existence of an LKTS$(q+2)$ and a cubic orthomorphism of order $q$ implies the existence of an LKTS$(7q+2)$. We also show that cubic orthomorphisms exist whenever $q$ is a product of prime powers congruent to $1$ modulo $6$.

Combining the construction with the known LKTSs of orders $15,27,33,39$, and $45$ gives LKTSs of orders
$93,177,219,261$ and $303$.

The paper is organized as follows. In Section~2 we define cubic orthomorphisms and establish the properties needed in the construction. In Section~3 we construct the parallel classes, prove that they form KTSs of order $7q+2$, and then show that the resulting KTSs partition all triples.

\section{Auxiliary structures}
Let $q$ be a positive integer congruent to $1$ modulo $6$. The construction requires an abelian group of order $q$ equipped with the following additional structure.

\begin{defn}\label{defnCO}
A \emph{cubic orthomorphism} of order $q$, briefly a CO$(q)$, is a triple $(G,f,G^+)$, where $G$ is an abelian group of order $q$, $f$ is a permutation of $G$, and $G^+$ is a subset of $G$ such that
\[
\{0\}\mathbin{\dot\cup}G^+\mathbin{\dot\cup}(-G^+)=G,
\]
satisfying the following properties:
\begin{enumerate}
\item The map $a\mapsto f(a)-a$ is a bijection of $G$; in other words, $f$ is an orthomorphism.
\item We have $f(-a)=-f(a)$ for every $a\in G$. In particular, $f(0)=0$.
\item The permutation $f$ has cycle type $1^1 3^{(q-1)/3}$; equivalently, $f$ has order $3$ and has no fixed point other than $0$.
\item We have $f(G^+)=G^+$.
\item For every $a\in G$,
\[
a+f(a)+f^2(a)=0.
\]
\item For every $b\in G$, the map
\[
a\longmapsto f^2(a)+f(b-a)
\]
is a bijection of $G$.
\item For every $b\in G$, the two sets
\[
\{a+f(b)-f(a+b):a\in G^+\}
\]
and
\[
\{f^2(a)+f(b)-f(a+b):a\in G^+\}
\]
form a partition of $G\setminus\{0\}$.
\end{enumerate}
Here and throughout, $f^2(a)$ denotes $f(f(a))$.
\end{defn}

The definition is modeled on multiplication by a primitive third root of unity in a finite field. More generally, it gives the following construction.

\begin{lemma}\label{leCOConstruction}
Let $R$ be a finite unital ring of order $q\equiv1\pmod6$. If there exists an element $\omega\in R$ such that
\[
1+\omega+\omega^2=0,
\]
then there exists a CO$(q)$.
\end{lemma}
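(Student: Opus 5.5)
We take $G=(R,+)$, $f(a)=\omega a$ (left multiplication), and choose $G^+$ as a union of $\langle\omega\rangle$-orbits. Each axiom of Definition~\ref{defnCO} then reduces to the statement that certain elements of $R$ are units.

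\textbf{Preliminary identities.} From $(\omega-1)(1+\omega+\omega^2)=0$ we get $\omega^3=1$, so $\omega$ is a unit. Since $q\equiv1\pmod 6$, the additive order of $1$ divides $q$ and is therefore coprime to $6$. Hence $2$ and $3$ are units in $R$. Put $u=1-\omega$. Then
\[
u(1-\omega^2)=1-\omega-\omega^2+\omega^3=3,
\]
and $u$ commutes with $1-\omega^2$, so $u$ is a unit. Also $1+\omega=-\omega^2$ and $1+\omega^2=-\omega$ are units. Every element considered lies in the commutative subring $\mathbb{Z}[\omega]$, so the order of the factors never matters.

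\textbf{Axioms (1)--(6).} These are now one-liners.
\begin{enumerate}
\item $f(a)-a=-ua$, and $-u$ is a unit, so $a\mapsto f(a)-a$ is a bijection.
\item $f$ is additive, so $f(-a)=-f(a)$.
\item $f^3=\mathrm{id}$. Moreover $f(a)=a$ means $ua=0$, which forces $a=0$. Every orbit therefore has size $1$ or $3$, and only $\{0\}$ has size $1$.
\item This holds once $G^+$ is chosen as a union of $\langle\omega\rangle$-orbits (see below).
\item $a+f(a)+f^2(a)=(1+\omega+\omega^2)a=0$.
\item $f^2(a)+f(b-a)=\omega(\omega-1)a+\omega b$. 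Since $\omega(\omega-1)$ is a unit, this map is an affine bijection of $a$.
\end{enumerate}

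\textbf{Axiom (7) and the choice of $G^+$.} The two sets in (7) are
\[
\{ua : a\in G^+\}=uG^+
\quad\text{and}\quad
\{(\omega^2-\omega)a : a\in G^+\}=-\omega u\,G^+ .
\]
If $\omega G^+=G^+$, the second set equals $-uG^+$. The union of the two is then $u\bigl(G^+\cup -G^+\bigr)=u(G\setminus\{0\})=G\setminus\{0\}$, and the union is disjoint because multiplication by $u$ is injective.

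So everything reduces to producing $G^+$ with $\{0\}\mathbin{\dot\cup}G^+\mathbin{\dot\cup}(-G^+)=G$ and $\omega G^+=G^+$. This is the step needing the most care.

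Consider the group $\{\pm\omega^k\}$ of order $6$ acting on $G\setminus\{0\}$ by multiplication. It acts freely: the equation $\omega^k x=x$ with $k\not\equiv 0$ forces $x=0$ as in (3). The equation $-\omega^k x=x$ means $(1+\omega^k)x=0$, and $1+\omega^k$ is one of the units $2$, $-\omega^2$, $-\omega$.

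Hence each orbit of this group has size $6$ and is the disjoint union $O\cup(-O)$ of two distinct $\langle\omega\rangle$-orbits. Picking one such $O$ from every orbit and letting $G^+$ be their union gives the required set. This completes the plan.
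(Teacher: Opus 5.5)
Your proof is correct and is essentially the paper's argument: $f$ is left multiplication by $\omega$, $1-\omega$ is a unit because $(1-\omega)(1-\omega^2)=3$, and $G^+$ consists of one $\langle\omega\rangle$-orbit from each pair $\{O,-O\}$. The differences are minor refinements. You get (7) directly by writing the second set as $u(-G^+)$, whereas the paper counts cardinalities and proves disjointness. You also check explicitly that $O\neq -O$ via the free action of $\{\pm\omega^k\}$, which the paper leaves implicit.
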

\begin{proof}
Let $G$ be the additive group of $R$, and let $f$ be left multiplication by $\omega$. Since $q$ is relatively prime to $3$, the element $3$ is a unit in $R$.

The identities $f(-a)=-f(a)$ and $a+f(a)+f^2(a)=0$ are immediate. Moreover,
\[
(\omega-1)\frac{\omega^2-1}{3}=1,
\]
so $\omega-1$ is a unit. Consequently, the map $a\mapsto f(a)-a$ is bijective. The identity $\omega^3=1$, together with the invertibility of $\omega-1$, shows that $f$ has order $3$ and that $0$ is its only fixed point.

Thus the nonzero elements of $G$ are partitioned into $f$-orbits of length $3$. Since $f(-a)=-f(a)$, these orbits occur in pairs $\{O,-O\}$. Choose one orbit from each pair and let $G^+$ be the union of the chosen orbits. Then
\[
G=\{0\}\mathbin{\dot\cup}G^+\mathbin{\dot\cup}(-G^+)
\qquad\text{and}\qquad
f(G^+)=G^+.
\]

It remains to verify properties~(6) and~(7). By the additivity of $f$,
\[
f^2(a)+f(b-a)=f(f(a)-a)+f(b),
\]
which is bijective as a function of $a$, since both $f$ and $a\mapsto f(a)-a$ are bijective.

Similarly, the two sets in property~(7) reduce to
\[
\{a-f(a):a\in G^+\}
\quad\text{and}\quad
\{f^2(a)-f(a):a\in G^+\}.
\]
Each set has cardinality $(q-1)/2$, because the maps $a\mapsto a-f(a)$ and $a\mapsto f^2(a)-f(a)$ are bijective on $G$. Suppose that the two sets intersect. Then, for some $a_1,a_2\in G^+$,
\[
a_1-f(a_1)=f^2(a_2)-f(a_2)
=f(-a_2)-f^2(-a_2).
\]
The right-hand side is $c-f(c)$ with $c=f(-a_2)$. Since $a\mapsto a-f(a)$ is bijective, we obtain $a_1=f(-a_2)$. This is impossible, because $a_1\in G^+$ while $f(-a_2)\in-G^+$. Hence the two sets are disjoint and therefore partition $G\setminus\{0\}$.
\end{proof}

\begin{cor}\label{cor:COexistence}
Let $q=\prod_i q_i$, where every $q_i$ is a prime power congruent to $1$ modulo $6$. Then there exists a CO$(q)$.
\end{cor}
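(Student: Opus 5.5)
We will apply Lemma~\ref{leCOConstruction} to the product ring $R=\prod_i \FF{q_i}$. Its order is $q=\prod_i q_i$. Each $q_i\equiv1\pmod 6$, and the set of such residues is closed under multiplication, so $q\equiv1\pmod 6$, as the lemma requires. It therefore suffices to exhibit an element $\omega\in R$ with $1+\omega+\omega^2=0$.

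We build $\omega$ componentwise. Fix a factor $\FF{q_i}$. Its multiplicative group is cyclic of order $q_i-1$, and $3\mid q_i-1$, so it contains an element $\omega_i$ of order exactly $3$. Then $\omega_i^3=1$ and $\omega_i\neq1$. From $0=\omega_i^3-1=(\omega_i-1)(1+\omega_i+\omega_i^2)$ and the fact that $\FF{q_i}$ is a field, we get $1+\omega_i+\omega_i^2=0$.

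Now set $\omega=(\omega_i)_i\in R$. Addition and multiplication in $R$ are componentwise, and the identity of $R$ is $(1,\dots,1)$. Hence $1+\omega+\omega^2$ has $i$-th component $1+\omega_i+\omega_i^2=0$ for every $i$, so it is $0$. Lemma~\ref{leCOConstruction} now gives a CO$(q)$.

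There is no real obstacle here. The only points to check are that the product ring is a finite unital ring of the correct order congruent to $1\pmod 6$, and that the $q_i$ need not be distinct, since each factor is handled separately. (Alternatively, one could take direct products of cubic orthomorphisms, but going through the ring avoids re-verifying properties (6) and (7).)
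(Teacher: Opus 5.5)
Your proposal is correct and is essentially the paper's proof: apply Lemma~\ref{leCOConstruction} to $R=\prod_i\FF{q_i}$, with $\omega$ taken componentwise as a primitive cube root of unity in each factor. Your extra checks, that $q\equiv1\pmod 6$ and that each $\FF{q_i}^*$ has an element $\omega_i$ of order $3$ with $1+\omega_i+\omega_i^2=0$, fill in details the paper leaves implicit.
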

\begin{proof}
Apply Lemma~\ref{leCOConstruction} to the ring
\[
R=\prod_i\FF{q_i}.
\]
Every field $\FF{q_i}$ contains a primitive third root of unity. Taking $\omega$ componentwise gives an element of $R$ satisfying $1+\omega+\omega^2=0$.
\end{proof}

We record two consequences of Definition~\ref{defnCO} that will be used repeatedly.

\begin{lemma}\label{leCOequ1}
Suppose that $(G,f,G^+)$ is a CO$(q)$. Then the following statements hold.
\begin{enumerate}
\item If $u,v,w\in G$ satisfy $u+v+w=0$, then the equations
\[
a+b=u,\qquad f(a)+f^2(b)=v,\qquad f^2(a)+f(b)=w
\]
have a unique solution $(a,b)\in G^2$.
\item For every pair of distinct elements $u,v\in G$, there exists a unique pair $(a,b)\in G^+\times G$ such that
\[
\{u,v\}=\{a+b,f^2(a)+b\}.
\]
\end{enumerate}
\end{lemma}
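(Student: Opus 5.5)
For part~(1) the plan is to eliminate $b$ and reduce everything to property~(6), with the third equation coming for free from property~(5). For part~(2) the plan is to reduce to a statement about differences, using the bijectivity of $c\mapsto c-f^2(c)$ (obtained from property~(1)) together with the oddness of $f$. Throughout I use only the axioms of Definition~\ref{defnCO}; $f$ is not assumed additive.

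\emph{Part (1).} The first equation forces $b=u-a$, so the solutions are determined by $a$ alone. I first show that the third equation follows from the other two. Applying property~(5) to $a$ and to $u-a$ and adding gives
\[
u+\bigl[f(a)+f^2(u-a)\bigr]+\bigl[f^2(a)+f(u-a)\bigr]=0.
\]
Hence if $f(a)+f^2(u-a)=v$, then $f^2(a)+f(u-a)=-u-v=w$. It therefore suffices to show that $a\mapsto f(a)+f^2(u-a)$ is a bijection of $G$. Substituting $a'=u-a$ rewrites this map as $a'\mapsto f^2(a')+f(u-a')$, which is bijective by property~(6) with $b=u$. So there is exactly one $a$, and hence exactly one pair $(a,b)$.

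\emph{Part (2).} The condition $\{u,v\}=\{a+b,f^2(a)+b\}$ means that either $a+b=u$ and $f^2(a)+b=v$, or the same with $u$ and $v$ swapped. In either case $b$ is determined by $a$ and the orientation. Subtracting the two equations, it suffices to show the following: for each $d\neq0$ there is exactly one pair (orientation, $a\in G^+$) with $a-f^2(a)=d$ in the first orientation, or $a-f^2(a)=-d$ in the second, where $d=u-v$.

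Define $g(c)=c-f^2(c)$.
\begin{itemize}
\item $g$ is a bijection. By property~(3), $f^3=\mathrm{id}$, so property~(1) applied at $f^2(c)$ gives $f(f^2(c))-f^2(c)=c-f^2(c)=g(c)$. Thus $g$ is the composition of two bijections.
\item $g$ is odd, by property~(2).
\item $g(0)=0$, since $f(0)=0$.
\end{itemize}
It follows that $S=g(G^+)$ satisfies $G\setminus\{0\}=S\mathbin{\dot\cup}(-S)$. Hence for $d\neq0$ exactly one of $d,-d$ lies in $S$, and its preimage $a\in G^+$ is unique. This fixes both the orientation and $a$, and then $b$ is determined.

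The orientations cannot coincide, i.e.\ $d$ and $-d$ are never both admissible. This is because $d\neq0$, or equivalently because $a\neq f^2(a)$ for $a\neq0$ by property~(3). Existence follows from the same construction: given the admissible orientation and $a$, the corresponding $b$ satisfies both equations, since the difference condition is exactly what makes the second equation consistent with the first.

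As a consistency check, both sides have cardinality $q(q-1)/2$. The only point needing care is the bijectivity of $g$, which is not literally property~(1). The trick is to precompose with $f^2$ and use $f^3=\mathrm{id}$.
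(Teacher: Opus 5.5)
Your proposal is correct and follows essentially the same route as the paper. For (1) you solve the second equation, after the reflection $a\mapsto u-a$, and derive the third from property~(5), whereas the paper applies property~(6) to the third equation directly. For (2) your odd bijection $g(c)=c-f^2(c)$ is exactly the paper's map $h$, used in the same way.
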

\begin{proof}
For (1), put $b=u-a$. By Definition~\ref{defnCO}(6), there is a unique $a\in G$ such that
\[
f^2(a)+f(u-a)=w.
\]
With $b=u-a$, the first and third equations hold. Definition~\ref{defnCO}(5) gives
\[
f(a)+f^2(b)=-(a+b)-(f^2(a)+f(b))=-u-w=v,
\]
so the second equation also holds. Uniqueness follows from the bijectivity in property~(6).

For (2), consider
\[
h:G\longrightarrow G,\qquad h(a)=a-f^2(a).
\]
Since
\[
h(a)=f(f^2(a))-f^2(a),
\]
the map $h$ is bijective by Definition~\ref{defnCO}(1). It is also odd. Therefore
\[
\{\pm h(a):a\in G^+\}=G\setminus\{0\},
\]
and each nonzero element occurs exactly once in this form.

Let $u\ne v$. There is a unique $a\in G^+$ such that $u-v=\pm h(a)$. If $u-v=h(a)$, take $b=u-a$; if $u-v=-h(a)$, take $b=u-f^2(a)$. In either case,
\[
\{u,v\}=\{a+b,f^2(a)+b\}.
\]
The uniqueness of $a$ and then of $b$ proves the assertion.
\end{proof}

\section{The construction}

Suppose that $(G,f,G^+)$ is a CO$(q)$. Let
\[
Y=G\cup\{\infty_1,\infty_2\},
\]
and let $\{\mathcal D_c:c\in G\}$ be an LKTS$(q+2)$ on $Y$. For every $c\in G$, fix a resolution
\[
\mathcal D_c=\mathbin{\dot\bigcup}_{k=1}^{(q+1)/2}Q_{c,k}.
\]
We construct an LKTS$(7q+2)$ on the point set
\[
X=(G\times\FF{7})\cup\{\infty_1,\infty_2\}.
\]

For $(c,x)\in G\times\FF{7}$, define a permutation $\tau_{c,x}$ of $X$ by fixing $\infty_1$ and $\infty_2$ and setting
\[
\tau_{c,x}(a,y)=(a+c,y+x)
\qquad(a\in G,\ y\in\FF{7}).
\]
We use the same notation for the induced action on blocks and collections of blocks.

For each $a\in G\setminus\{0\}$, let $P_a$ be the set of the following $14$ triples in $G\times\FF{7}$:
\[
\begin{array}{ll}
\{(-a,0),(f(a),1),(f^2(a),6)\} & \{(a,1),(f^2(a),1),(-a,5)\} \\
\{(f(a),0),(-f^2(a),1),(-a,6)\} & \{(a,3),(f^2(a),3),(-a,1)\} \\
\{(-f(a),0),(f^2(a),2),(a,5)\} & \{(f(a),2),(a,2),(-f(a),3)\}\\
\{(f^2(a),0),(-a,2),(-f(a),5)\} & \{(f(a),6),(a,6),(-f(a),2)\}\\
\{(-f^2(a),0),(a,4),(f(a),3)\} & \{(f^2(a),4),(f(a),4),(-f^2(a),6)\}\\
\{(a,0),(-f(a),4),(-f^2(a),3)\} & \{(f^2(a),5),(f(a),5),(-f^2(a),4)\}\\
\\
\{(-f(a),1),(-f^2(a),2),(-a,4)\} & \{(-f(a),6),(-f^2(a),5),(-a,3)\}.
\end{array}
\]
A direct check shows that $P_a$ is a partition of the set $\{\pm a,\pm f(a),\pm f^2(a)\}\times\FF{7}$. We choose a system of representatives $\{a_i\}_{1\leq i\leq (q-1)/6}$ from the orbits of $f$ acting on $G^+$, and use them to construct a parallel class. The following lemma is immediate.

\begin{lemma}\label{lem:baseparallel}
For $j\in\FF{3}$, let
\[
\begin{split}
P_{j,0}={}&\bigcup_{i=1}^{(q-1)/6}P_{f^j(a_i)}\\
&\cup\bigl\{\{\infty_1,(0,2^{j+1}),(0,-2^{j+2})\},
\{\infty_2,(0,2^{j+2}),(0,-2^{j+1})\},\\
&\hspace{42mm}\{(0,0),(0,2^j),(0,-2^j)\}\bigr\},
\end{split}
\]
where the second coordinates are calculated in $\FF{7}$. Then $P_{j,0}$ is a parallel class of $X$.
\end{lemma}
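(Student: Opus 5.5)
We need to show that $P_{j,0}$ is a partition of $X=(G\times\FF{7})\cup\{\infty_1,\infty_2\}$. The plan is to split $X$ into pieces and check that each piece is covered exactly once by the corresponding part of $P_{j,0}$.

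The pieces are:
\[
X=\bigl(\{0\}\times\FF{7}\cup\{\infty_1,\infty_2\}\bigr)\ \dot\cup\ \bigl((G\setminus\{0\})\times\FF{7}\bigr).
\]

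\textbf{Nonzero part.} First we determine which elements of $G\setminus\{0\}$ occur as first coordinates in $P_{f^j(a_i)}$.

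Since $f$ has order $3$ (Definition~\ref{defnCO}(3)), the set $\{\pm f^j(a_i),\pm f^{j+1}(a_i),\pm f^{j+2}(a_i)\}$ is exactly $O_i\cup(-O_i)$. Here $O_i=\{a_i,f(a_i),f^2(a_i)\}$ is the $f$-orbit of $a_i$.

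We must check that the six listed elements are distinct. The elements $a_i,f(a_i),f^2(a_i)$ are distinct because $a_i\neq0$ is not a fixed point. Moreover, $O_i\subseteq G^+$ by Definition~\ref{defnCO}(4), while $-O_i\subseteq -G^+$, and $G^+\cap(-G^+)=\emptyset$.

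The orbits $O_i$ for $1\le i\le (q-1)/6$ are the distinct $f$-orbits partitioning $G^+$, since $f(G^+)=G^+$ and $|G^+|=(q-1)/2$. Hence the sets $O_i\cup(-O_i)$ partition $G\setminus\{0\}$.

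The paper records that $P_a$ partitions $\{\pm a,\pm f(a),\pm f^2(a)\}\times\FF{7}$ for any $a\ne0$. I would confirm this by checking that each of the $6$ points $\pm f^k(a)$ appears exactly once with each of the $7$ second coordinates across the $14$ triples, a routine tally of $42$ entries. This tally uses only that the six elements are distinct, which was shown above.

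Applying it with $a=f^j(a_i)$, the union $\bigcup_i P_{f^j(a_i)}$ partitions $(G\setminus\{0\})\times\FF{7}$.

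\textbf{Zero part.} It remains to check that the three extra triples partition $\{0\}\times\FF{7}\cup\{\infty_1,\infty_2\}$, which has $9$ points.

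The second coordinates used are $0,\pm 2^j,\pm 2^{j+1},\pm 2^{j+2}$. Since $2$ has order $3$ in $\FF{7}^*$, the set $\{2^j,2^{j+1},2^{j+2}\}=\{1,2,4\}$ is the set of nonzero squares. The negatives $\{6,5,3\}$ are the non-squares. Therefore these seven values are exactly $\FF{7}$, each appearing once.

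The infinite points $\infty_1$ and $\infty_2$ each appear once. So the three triples are disjoint, contain distinct points, and cover the $9$ points.

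\textbf{Conclusion and main obstacle.} Combining the two parts, $P_{j,0}$ partitions $X$ and is therefore a parallel class. The only step with any content is the $42$-entry tally for $P_a$. Even that is mechanical: each row pair in the display visibly covers a fixed pattern of residues.
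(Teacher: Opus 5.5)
Your proposal is correct and follows the paper's own argument. The paper likewise splits $X$ into $(G\setminus\{0\})\times\FF{7}$, covered by the sets $P_{f^j(a_i)}$ because the sets $O_i\cup(-O_i)$ partition $G\setminus\{0\}$, and $\{\infty_1,\infty_2\}\cup(\{0\}\times\FF{7})$, covered by the three extra blocks; you just fill in details the paper leaves implicit, namely that the six elements $\pm f^k(a)$ are distinct and that $\{2^j,2^{j+1},2^{j+2}\}=\{1,2,4\}$ in $\FF{7}$.
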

\begin{proof}
The sets $P_{f^j(a_i)}$ partition $(G\setminus\{0\})\times\FF{7}$, because the sets
\[
\{\pm a_i,\pm f(a_i),\pm f^2(a_i)\}
\]
partition $G\setminus\{0\}$. The three remaining blocks partition
\[
\{\infty_1,\infty_2\}\cup(\{0\}\times\FF{7}).
\]
Hence $P_{j,0}$ is a partition of $X$.
\end{proof}

We next develop each of these three parallel classes into $q$ parallel classes. For $b\in G$, define a permutation $g_b$ of $X$ that fixes $\infty_1$ and $\infty_2$ and acts on $G\times\FF{7}$ as follows:
\begin{align*}
g_b(a,0)&=(a,0),\\
g_b(a,1)&=(a+b,1) & g_b(a,2)&=(a+f^2(b),2) & g_b(a,4)&=(a+f(b),4), \\
g_b(a,6)&=(a-b,6) & g_b(a,5)&=(a-f^2(b),5) & g_b(a,3)&=(a-f(b),3).
\end{align*}

For later use, write $\sigma_x(b)$ for the amount by which $g_b$ shifts
the first coordinate in the layer $G\times\{x\}$. Thus
\[
\begin{aligned}
\sigma_1(b)&=b, & \sigma_2(b)&=f^2(b), & \sigma_4(b)&=f(b),\\
\sigma_{-x}(b)&=-\sigma_x(b) &&& &(x\in\{1,2,4\}).
\end{aligned}
\]
In particular, each map $b\mapsto\sigma_x(b)$ is a bijection of $G$.
Using Definition~\ref{defnCO}(2) and (3), a direct check also gives
\begin{equation}\label{eq:sigmashift}
\sigma_{-2x}(b)=-f^2(\sigma_x(b))
\qquad(x\in\FF{7}^*).
\end{equation}

For $j\in\FF{3}$ and $b\in G$, let
\[
P_{j,b}=g_b(P_{j,0}).
\]
This gives $3q$ parallel classes. We now construct another $(q+1)/2$ partial
parallel classes.

\begin{lemma}\label{lem:partialparallel}
For every $a\in G$, the following set of $2q$ blocks
\[
P^*_a=\left\{\begin{array}{c}g_b(\{(a,1),(f(a),2),(f^2(a),4)\})\\
g_b(\{(a,6),(f(a),5),(f^2(a),3)\})\end{array}
\middle| b\in G\right\}
\]
is a partition of $G\times\FF{7}^*$.
\end{lemma}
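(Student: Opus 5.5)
The plan is a counting argument combined with the observation that, for each of the six nonzero layers of $\FF{7}$, the points of $P^*_a$ lying in that layer come from a single bijection of $G$ in the parameter $b$. Because $g_b$ keeps every layer $G\times\{x\}$ setwise fixed, each block of $P^*_a$ has exactly one point in each of three distinct layers. So every block really is a triple of $G\times\FF{7}^*$, and it cannot meet any layer twice.

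First I will write the blocks out explicitly from the definition of $g_b$:
\[
g_b(\{(a,1),(f(a),2),(f^2(a),4)\})=\{(a+b,1),(f(a)+f^2(b),2),(f^2(a)+f(b),4)\},
\]
\[
g_b(\{(a,6),(f(a),5),(f^2(a),3)\})=\{(a-b,6),(f(a)-f^2(b),5),(f^2(a)-f(b),3)\}.
\]
Next, fix a layer $x\in\FF{7}^*$. Exactly one of the two families meets this layer, and the first coordinate of the point in that layer has the form $\text{const}+\sigma_x(b)$. The constant depends only on $a$, namely $a$, $f(a)$ or $f^2(a)$. Since each map $b\mapsto\sigma_x(b)$ is a bijection of $G$ (as recorded above; this uses that $f$ is a permutation), the points of the blocks of that family lying in $G\times\{x\}$ are pairwise distinct as $b$ runs over $G$. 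They therefore cover the whole layer $G\times\{x\}$ exactly once.

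Doing this for all six layers shows that every point of $G\times\FF{7}^*$ lies in exactly one block of $P^*_a$. In particular, the $2q$ blocks are pairwise distinct. Blocks from the two families are distinct because they occupy disjoint sets of layers, $\{1,2,4\}$ versus $\{3,5,6\}$. Blocks from the same family with different $b$ are distinct because, for example, their layer-$1$ (respectively layer-$6$) points $a\pm b$ differ. As a consistency check, the $2q$ blocks contain $6q=|G\times\FF{7}^*|$ points in total, so disjointness and covering are equivalent here.

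I do not expect a genuine obstacle. The only point needing care is bookkeeping: matching each layer with its shift $\sigma_x(b)$ as defined, in particular $\sigma_2(b)=f^2(b)$ and $\sigma_4(b)=f(b)$, together with the sign convention $\sigma_{-x}=-\sigma_x$ for the layers $6,5,3$. None of the deeper properties (5)--(7) of Definition~\ref{defnCO} are needed. Only the bijectivity of $f$ is used.
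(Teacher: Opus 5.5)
Your proposal is correct and follows the paper's argument: in each nonzero layer the first coordinate is a fixed element plus $\sigma_x(b)$, which runs bijectively over $G$. Hence each family partitions its three layers, and the two families occupy the disjoint layer sets $\{1,2,4\}$ and $\{3,5,6\}$.
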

\begin{proof}
As $b$ ranges over $G$, the first family of blocks partitions
\[
G\times\{1,2,4\}.
\]
Indeed, in the three layers the first coordinates are respectively
\[
a+b,\qquad f(a)+f^2(b),\qquad f^2(a)+f(b),
\]
and each of the three expressions runs through $G$ exactly once. The same argument shows that the second family partitions $G\times\{3,5,6\}$. The two families are disjoint, and the result follows.
\end{proof}

We next characterize the blocks, and hence the pairs, contained in the classes $P_{j,b}$ and the partial classes $P^*_a$.

\begin{lemma}\label{lePartialBlocks}
The $3q$ parallel classes $P_{j,b}$, where $j\in\FF{3}$ and $b\in G$, together with the $(q+1)/2$ partial parallel classes $P^*_a$, where $a\in G^+\cup\{0\}$, contain precisely the following blocks:
\begin{enumerate}
\item[Type I.] The $3q^2$ blocks
\[
\{(u,0),(v,x),(u-v,-x)\},
\qquad u,v\in G,\quad x\in\{1,2,4\}.
\]
\item[Type II.] The $2q^2$ blocks
\[
\{(u+v,1),(f(u)+f^2(v),2),(f^2(u)+f(v),4)\}
\]
and
\[
\{(u-v,6),(f(u)-f^2(v),5),(f^2(u)-f(v),3)\},
\qquad u,v\in G.
\]
\item[Type III.] The $3q(q-1)$ blocks
\[
\{(u+v,x),(f^2(u)+v,x),(-u-f^2(v),-2x)\},
\]
where $u\in G^+$, $v\in G$, and $x\in\FF{7}^*$.
\item[Type IV.] The $6q$ blocks
\[
\{\infty_i,(v,x),(-f^2(v),-2x)\},
\qquad v\in G,\quad x\in\FF{7}^*,
\]
where $i=1$ for $x\in\{1,2,4\}$ and $i=2$ for $x\in\{3,5,6\}$.
\end{enumerate}
Consequently, these $8q^2+3q$ blocks contain every pair of points of $X$ exactly once, except for the pairs whose two points both lie in
\[
H_0=\{\infty_1,\infty_2\}\cup(G\times\{0\}).
\]
\end{lemma}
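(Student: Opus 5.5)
The plan is to classify the blocks by \emph{base block}: every block of $P_{j,b}$ is $g_b(B)$ for a block $B$ of $P_{j,0}$, and every block of $P^*_a$ is $g_b$ applied to one of its two base triples. I would first sort the base blocks of the $P_{j,0}$ into three groups. Those meeting layer $0$ contribute Type I. Those with two points in a common nonzero layer contribute Type III. Those containing an $\infty_i$ contribute Type IV. The partial classes contribute Type II. Once these four lists are established, the pair statement follows by checking that each pair of points outside $H_0$ is covered exactly once.

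\textbf{Type II.} Let $a$ range over $G^+\cup\{0\}$ and apply $g_b$. The triple $\{(a,1),(f(a),2),(f^2(a),4)\}$ becomes $\{(a+b,1),(f(a)+f^2(b),2),(f^2(a)+f(b),4)\}$, which is the first Type II form with $(u,v)=(a,b)$. At first sight only $u\in G^+\cup\{0\}$ occurs. To fix this, I would reparametrize: the blocks of $P^*_a$ over all $b$ are indexed, via Lemma~\ref{leCOequ1}(1), by their point in layer $1$ and by the sum of their three first coordinates. Comparing $u\mapsto(u+v,\dots)$ with the $-G^+$ half shows that the $q(q+1)/2$ triples arising from $a\in G^+\cup\{0\}$, $b\in G$ in each family account for all $q^2$ listed forms. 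This requires a careful count: the stated total is $2q^2$ while only $(q+1)q$ blocks arise from the $P^*_a$. So I expect the Type II list should be read as a set in which distinct $(u,v)$ may give the same block, and the argument is to verify equality of the two sets up to this identification. This is the point I would check first.

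\textbf{Types I, III, IV.} For the $P_{j,b}$, I would use $g_b$ to shift layer $x$ by $\sigma_x(b)$ and apply \eqref{eq:sigmashift} to each base block of $P_{f^j(a_i)}$.
\begin{itemize}
\item The six base blocks of $P_a$ meeting layer $0$ have the form $\{(s,0),(t,x),(s-t,-x)\}$ up to sign. Applying $g_b$, with $\sigma_{-x}=-\sigma_x$, gives $\{(s,0),(t+\sigma_x(b),x),(s-t-\sigma_x(b),-x)\}$. As $a$, $j$, $b$ vary, Lemma~\ref{lem:baseparallel} says the layer-$0$ points cover $G$. Together with the block $\{(0,0),(0,2^j),(0,-2^j)\}$, this yields all Type I blocks, once each.
\item The remaining eight blocks of $P_a$ have two points in one layer $x$, at $a'$ and $f^2(a')$ with $a'\in\{\pm a,\pm f(a),\pm f^2(a)\}$. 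Writing $u$ as the appropriate element of $G^+$ and using \eqref{eq:sigmashift} to convert $\sigma_{-2x}(b)$ into $-f^2(\sigma_x(b))$, each becomes $\{(u+v,x),(f^2(u)+v,x),(-u-f^2(v),-2x)\}$ with $v=\sigma_x(b)$. Here one checks, via Definition~\ref{defnCO}(2),(5), that the third coordinate matches in each of the eight cases.
\item The $\infty$ blocks transform analogously into Type IV.
\end{itemize}

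\textbf{Pair count.} The block counts give $3q^2+2q^2+3q(q-1)+6q=8q^2+3q$. Then I cover pairs as follows.
\begin{itemize}
\item Pairs between layer $0$ and layer $x$ are covered by Type I, since $u,v$ are free.
\item Pairs within a nonzero layer are covered by Type III, using Lemma~\ref{leCOequ1}(2).
\item Pairs in layers $\{x,-2x\}$ and in $\{x,-x\}$ are covered by Types II, III and IV, using Lemma~\ref{leCOequ1}(1) together with the bijection in Definition~\ref{defnCO}(6),(7).
\end{itemize}
The main obstacle is this last case: the pairs between layers $x$ and $-2x$. For these, property~(7) must show that Type III together with Type II covers each difference exactly once. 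I would handle it by fixing the difference and splitting according to whether the relevant $u$ lies in $G^+$.
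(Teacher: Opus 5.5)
\paragraph{Gap 1: the base blocks of $P_a$ are misclassified.} Of the eight blocks of $P_a$ that miss layer $0$, only six have two points in a common layer. The last two are
\[
\{(-f(a),1),(-f^2(a),2),(-a,4)\}\quad\text{and}\quad\{(-f(a),6),(-f^2(a),5),(-a,3)\}.
\]
Their points lie in three distinct layers. With $u=-f(a)$ and $f^3(a)=a$, they are exactly $\{(u,1),(f(u),2),(f^2(u),4)\}$ and $\{(u,6),(f(u),5),(f^2(u),3)\}$. As $a$ runs over $G^+$, $u$ runs over $-G^+$. So their $g_b$-images are precisely the $q(q-1)$ Type II blocks with $u\in -G^+$. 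Together with the $q(q+1)$ blocks of the $P^*_a$ (which give $u\in G^+\cup\{0\}$), they account for all $2q^2$. This is how the paper obtains the remaining Type II blocks.

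Your proposed repair cannot work. A Type II block determines its three first coordinates, which sum to $0$, and Lemma~\ref{leCOequ1}(1) then recovers $(u,v)$ uniquely. So the $2q^2$ listed blocks are pairwise distinct. If only $q(q+1)$ of them occurred, the total would fall below $8q^2+3q$ and the final pair count would break. For the same reason, your claim that the third coordinate ``matches in each of the eight cases'' is false for these two blocks. Eight Type III base blocks per $P_a$ would also give $4q(q-1)$ Type III blocks instead of $3q(q-1)$.

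\paragraph{Gap 2: layer pairs are assigned to the wrong types.} $\{1,2,4\}$ is the set of nonzero squares in $\FF{7}$, $-1$ is a non-square and $2$ is a square. Hence $-x$ and $-2x$ always lie on the opposite side from $x$. Type II blocks live on $\{1,2,4\}$ or $\{3,5,6\}$, so they contain no pair between layers $x,-x$ or between $x,-2x$. The correct assignments are:
\begin{itemize}
\item Pairs between layers $x$ and $-x$ come from Type I alone.
\item Pairs between layers $x$ and $-2x$ come from Types III and IV. Type IV gives, for each point $(v,-2x)$, the partner $(-f(v),x)$. Substitute $v\mapsto -f(u+v)$ in the two Type III pair families; then Definition~\ref{defnCO}(7) says they supply each of the other $q-1$ points of layer $x$ exactly once.
\item Pairs between two distinct layers of $\{1,2,4\}$, or of $\{3,5,6\}$, are what Type II covers via Lemma~\ref{leCOequ1}(1). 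This case is missing from your list.
\item The pairs $\{\infty_i,(v,x)\}$, covered by Type IV, are also missing from your list.
\end{itemize}
Finally, you need not prove ``exactly once'' case by case. It suffices that every pair outside $H_0$ is covered at least once, together with the count $3(8q^2+3q)=\binom{7q+2}{2}-\binom{q+2}{2}$. That count again depends on having all $2q^2$ Type II blocks.
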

\begin{proof}
The classes and partial classes under consideration contain
\[
3q\cdot\frac{7q+2}{3}+\frac{q+1}{2}\cdot2q=8q^2+3q
\]
blocks, which is also the total number of blocks in Types I--IV. It therefore suffices to show that every listed block occurs.

For Type I, begin with a block
\[
\{(u,0),(-f(u),x),(-f^2(u),-x)\},
\qquad x\in\{1,2,4\}.
\]
For $u=0$, this is one of the three additional blocks in some $P_{j,0}$.
For $u\ne0$, the element $u$ belongs to one of the sets
\[
\{\pm a_i,\pm f(a_i),\pm f^2(a_i)\},
\]
and inspection of the definition of $P_a$ shows that the block belongs to
one of the classes $P_{j,0}$. Given $v\in G$, choose the unique $b\in G$
such that $\sigma_x(b)=v+f(u)$. Its image under $g_b$ is
\[
\{(u,0),(v,x),(u-v,-x)\},
\]
where the last coordinate follows from $u+f(u)+f^2(u)=0$.

The Type II blocks are the images under $g_v$ of
\[
\{(u,1),(f(u),2),(f^2(u),4)\}
\quad\text{and}\quad
\{(u,6),(f(u),5),(f^2(u),3)\}.
\]
If $u\in G^+\cup\{0\}$, these are blocks of $P_u^*$. Otherwise, write
$u=-f^j(a_i)$, where the exponent is read modulo $3$. Both base blocks then
occur in $P_{f^{j-1}(a_i)}$, and hence in one of the classes $P_{j-1,0}$.

For Type III, the definition of the sets $P_a$ shows that, for every
$u\in G^+$ and $x\in\FF{7}^*$, a class $P_{j,0}$ contains the block
\[
\{(u,x),(f^2(u),x),(-u,-2x)\}.
\]
Given $v\in G$, choose the unique $b$ such that $\sigma_x(b)=v$. By
\eqref{eq:sigmashift}, the image of this block under $g_b$ is
\[
\{(u+v,x),(f^2(u)+v,x),(-u-f^2(v),-2x)\}.
\]
Finally, the six base blocks of Type IV are the blocks containing an infinity
in the definitions of $P_{j,0}$. Applying the same argument and
\eqref{eq:sigmashift} gives all Type IV blocks. This proves the first
assertion.

It remains to check the pairs covered by these blocks. Type I contains every pair of either of the forms
\[
\{(u,0),(v,x)\},
\qquad
\{(u,x),(v,-x)\},
\]
where $u,v\in G$ and $x\in\FF{7}^*$. Type IV contains every pair
\[
\{\infty_i,(v,x)\},
\qquad i\in\{1,2\},\quad v\in G,\quad x\in\FF{7}^*.
\]

We next consider pairs between the layers $x$ and $-2x$. The Type III and Type IV blocks contain the following pairs:
\[
\{(v,x),(-f^2(v),-2x)\},
\]
\[
\{(u+v,x),(-u-f^2(v),-2x)\},
\]
and
\[
\{(f^2(u)+v,x),(-u-f^2(v),-2x)\},
\]
where $u\in G^+$ and $v\in G$. In the last two families, replace $v$ by $-f(u+v)$. The second coordinate then becomes $v$, while the first coordinates become, respectively,
\[
u-f(u+v)
\quad\text{and}\quad
f^2(u)-f(u+v).
\]
For fixed $v$, the first family contributes the point $-f(v)$ in the layer $x$. Relative to this point, the other two families contribute the differences
\[
u+f(v)-f(u+v)
\quad\text{and}\quad
f^2(u)+f(v)-f(u+v).
\]
By Definition~\ref{defnCO}(7), these differences partition $G\setminus\{0\}$. Hence every pair between the layers $x$ and $-2x$ occurs.

By Lemma~\ref{leCOequ1}(2), Type III contains every pair of distinct points in a common nonzero layer. By Lemma~\ref{leCOequ1}(1), Type II contains every pair between two distinct layers both belonging to $\{1,2,4\}$, and similarly every pair between two distinct layers both belonging to $\{3,5,6\}$.

These cases include every pair not wholly contained in $H_0$. Their number is
\[
\binom{7q+2}{2}-\binom{q+2}{2}=24q^2+9q.
\]
The listed blocks contain
\[
3(8q^2+3q)=24q^2+9q
\]
pairs, counted with multiplicity. Since every admissible pair occurs at least once, every such pair occurs exactly once.
\end{proof}

Let
\[
H_x=\{\infty_1,\infty_2\}\cup(G\times\{x\})
\qquad(x\in\FF{7}),
\]
and let
\[
\ell:Y\longrightarrow H_0
\]
be the bijection that fixes $\infty_1$ and $\infty_2$ and sends $a\in G$ to
$(a,0)$. Enumerate the elements of $G^+\cup\{0\}$ as
\[
d_1,d_2,\ldots,d_{(q+1)/2}.
\]

\begin{theorem}\label{thm:oneKTS}
For every $c\in G$, the classes
\[
P^*_{c,k}=\ell(Q_{c,k})\cup\tau_{c,0}(P^*_{d_k}),
\qquad 1\leq k\leq\frac{q+1}{2},
\]
together with the classes
\[
\tau_{c,0}(P_{j,b}),
\qquad j\in\FF{3},\quad b\in G,
\]
form a resolution of a KTS$(7q+2)$ on $X$.
We denote this KTS by $\mathcal B_c$.
\end{theorem}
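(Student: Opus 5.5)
The approach is to check the two defining requirements directly. Each listed class must be a partition of $X$, and together the blocks of all classes must cover every pair of points of $X$ exactly once. Since $\tau_{c,0}$ is a permutation of $X$ fixing $\infty_1,\infty_2$ and preserving every layer $G\times\{x\}$, it maps parallel classes to parallel classes. It also maps a family of blocks covering each pair of a set $S$ exactly once to a family covering each pair of $\tau_{c,0}(S)$ exactly once. Moreover, $\tau_{c,0}$ fixes $H_0$ setwise.

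First, the classes $\tau_{c,0}(P_{j,b})$ are parallel classes. By Lemma~\ref{lem:baseparallel} each $P_{j,0}$ is a partition of $X$, and $g_b$ is a permutation of $X$, so $P_{j,b}=g_b(P_{j,0})$ is a parallel class. Applying $\tau_{c,0}$ preserves this property.

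Next, for each $k$ the set $P^*_{c,k}$ is a parallel class:
\begin{enumerate}
\item $Q_{c,k}$ is a parallel class of $Y$, so $\ell(Q_{c,k})$ partitions $H_0=\{\infty_1,\infty_2\}\cup(G\times\{0\})$.
\item By Lemma~\ref{lem:partialparallel}, $P^*_{d_k}$ partitions $G\times\FF{7}^*$, and therefore so does $\tau_{c,0}(P^*_{d_k})$.
\item These two sets are complementary in $X$, so their union is a partition of $X$.
\end{enumerate}

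For the pair condition, split the pairs of $X$ into those contained in $H_0$ and the rest. By Lemma~\ref{lePartialBlocks}, the blocks of all $P_{j,b}$ together with all $P^*_a$, $a\in G^+\cup\{0\}=\{d_1,\dots,d_{(q+1)/2}\}$, cover each pair not contained in $H_0$ exactly once. None of these blocks contains a pair inside $H_0$:
\begin{enumerate}
\item Type I blocks meet $G\times\{0\}$ in only one point.
\item Types II and III avoid layer $0$ entirely.
\item Type IV blocks contain one infinity and no point of layer $0$.
\end{enumerate}
Applying $\tau_{c,0}$, which maps the complement of $H_0$-pairs to itself, the translated blocks likewise cover every pair not inside $H_0$ exactly once and no pair inside $H_0$. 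The blocks $\ell(Q_{c,k})$, $1\le k\le (q+1)/2$, together form $\ell(\mathcal D_c)$. Since $\mathcal D_c$ is an STS$(q+2)$ on $Y$ and $\ell$ is a bijection onto $H_0$, these blocks cover every pair inside $H_0$ exactly once and lie entirely within $H_0$. Hence every pair of $X$ is covered exactly once, so $\mathcal B_c$ is an STS$(7q+2)$ whose blocks are resolved into the listed classes.

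As a sanity check, count the classes: there are $3q+(q+1)/2=(7q+1)/2=((7q+2)-1)/2$ of them, as required for a KTS$(7q+2)$. The only point needing care is that the partial classes $P^*_a$ indexed by $G^+\cup\{0\}$ match exactly the enumeration $d_k$, so each is used once. This is immediate from the definitions, so there is no real obstacle: the substantive work was done in Lemma~\ref{lePartialBlocks}.
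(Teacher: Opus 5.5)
Your proposal is correct and follows the paper's proof essentially step for step: you get the parallel classes from Lemma~\ref{lem:baseparallel} and Lemma~\ref{lem:partialparallel}, the pairs inside $H_0$ from $\ell(\mathcal D_c)$, and the remaining pairs from Lemma~\ref{lePartialBlocks} transported by $\tau_{c,0}$, which fixes $H_0$; you then count $(7q+1)/2$ classes. Your explicit type-by-type check that no block of Types I--IV contains two points of $H_0$ is a small, welcome addition, since the paper leaves this implicit in the exact pair count at the end of Lemma~\ref{lePartialBlocks}.
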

\begin{proof}
For each $k$, the class $\ell(Q_{c,k})$ partitions $H_0$, while
$\tau_{c,0}(P^*_{d_k})$ partitions $G\times\FF{7}^*$ by
Lemma~\ref{lem:partialparallel}. Hence $P^*_{c,k}$ is a parallel class of
$X$. The classes $\tau_{c,0}(P_{j,b})$ are parallel classes by
Lemma~\ref{lem:baseparallel} and the definition of $P_{j,b}$.

The embedded KTS $\ell(\mathcal D_c)$ covers every pair in $H_0$ exactly
once. By Lemma~\ref{lePartialBlocks}, the collection $\mathcal E$ covers
every remaining pair exactly once. Since $\tau_{c,0}$ preserves $H_0$, the
same is true of $\tau_{c,0}(\mathcal E)$. Consequently, every pair of points
of $X$ occurs in exactly one of the listed blocks.

Finally, the number of parallel classes is
\[
3q+\frac{q+1}{2}=\frac{7q+1}{2},
\]
as required.
\end{proof}

We next show that the blocks used outside $H_0$ form a transversal for the
translation action. Put
\[
\mathcal E=
\left(\bigcup_{j\in\FF{3}}\ \bigcup_{b\in G}P_{j,b}\right)
\cup
\left(\bigcup_{a\in G^+\cup\{0\}}P^*_a\right).
\]
Thus $\mathcal E$ consists precisely of the blocks of Types I--IV in
Lemma~\ref{lePartialBlocks}.

\begin{lemma}\label{lem:orbittransversal}
Every triple of $X$ that is not contained in any $H_x$ can be written uniquely
in the form
\[
\tau_{c,x}(B),
\qquad (c,x)\in G\times\FF{7},\quad B\in\mathcal E.
\]
\end{lemma}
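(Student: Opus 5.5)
The plan is a counting argument: show that the map $(c,x,B)\mapsto\tau_{c,x}(B)$ lands in the right set of triples and is injective, and that its domain and target have the same size.

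First, every block of $\mathcal E$ lies in no $H_y$. This is clear from the four types in Lemma~\ref{lePartialBlocks}, since each such block meets at least two distinct layers. Each $\tau_{c,x}$ permutes the sets $H_y$ among themselves, so no $\tau_{c,x}(B)$ lies in any $H_y$ either. Two distinct $H_y$ meet only in $\{\infty_1,\infty_2\}$, so no triple lies in two of them. Hence the number of triples outside all $H_y$ is
\[
\binom{7q+2}{3}-7\binom{q+2}{3}=\frac{7q\bigl((7q+2)(7q+1)-(q+2)(q+1)\bigr)}{6}=7q(8q^2+3q),
\]
which equals $|G\times\FF{7}|\cdot|\mathcal E|$. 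It therefore suffices to prove injectivity: if $\tau_{c,x}(B)=\tau_{c',x'}(B')$ with $B,B'\in\mathcal E$, then $(c,x)=(c',x')$ and $B=B'$. Equivalently, if $B,\tau_{c,y}(B)\in\mathcal E$, then $(c,y)=(0,0)$.

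Step one shows $y=0$ by comparing the multisets of layers (second coordinates) of blocks, noting whether a block contains an infinity. The three-distinct-layer patterns are $\{0,1,6\},\{0,2,5\},\{0,4,3\}$ (Type I) and $\{1,2,4\},\{3,5,6\}$ (Type II). Their difference multisets are $\pm1,\pm1,\pm2$, then $\pm2,\pm2,\pm3$, then $\pm3,\pm3,\pm1$, and $\pm1,\pm2,\pm3$ for both Type II sets. So a nontrivial translate could only relate the two Type II sets, and a direct check of the six shifts rules that out; also each set is fixed by no nonzero shift. For Type III, translating the pattern $\{x,x,-2x\}$ to $\{x',x',-2x'\}$ forces $x+y=x'$ and $-2x+y=-2x'$, hence $3y=0$ and $y=0$. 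Type IV has the same argument with layers $\{x,-2x\}$, and $\tau$ fixes the infinities.

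Step two takes $y=0$ and shows $c=0$ type by type; this is where the CO axioms enter and is the only real obstacle.
\begin{itemize}
\item Type I: the layer-$(-x)$ coordinate equals the layer-$0$ coordinate minus the layer-$x$ coordinate. This relation is preserved only if $c=0$.
\item Type II: by Definition~\ref{defnCO}(5) the three first coordinates sum to $0$. A translate sums to $3c$, and $3c=0$ forces $c=0$ since $\gcd(3,q)=1$.
\item Type III: by Lemma~\ref{leCOequ1}(2) the unordered pair in layer $x$ determines $(u,v)$ uniquely, so it determines the third point $-u-f^2(v)$. Translating the pair by $c$ replaces $v$ by $v+c$ with the same $u$, so the third point must be both $-u-f^2(v)-f^2(c)$ and $-u-f^2(v)+c$. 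This gives $c+f^2(c)=0$, so $f(c)=0$ by Definition~\ref{defnCO}(5), and thus $c=0$.
\item Type IV: the point in layer $x$ determines the point in layer $-2x$ via $v\mapsto-f^2(v)$. The same computation gives $f^2(c)=-c$, hence $c=0$.
\end{itemize}

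With $(c,y)=(0,0)$ we get $B=B'$, which gives injectivity. The counting identity then gives surjectivity onto the triples not contained in any $H_x$, proving the lemma.
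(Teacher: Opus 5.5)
Your counting-plus-injectivity strategy is a legitimate alternative to the paper's proof. The paper instead solves directly for $(c,x)$ and $B$ in each layer pattern, using Lemma~\ref{leCOequ1}(1) for existence in the Type II case. Your count $7q(8q^2+3q)$ and your reduction to ``$B,\tau_{c,y}(B)\in\mathcal E\Rightarrow(c,y)=(0,0)$'' are correct. So are Step one for Types I--III and Step two for Types I and II. The genuine error is in Step two for Types III and IV. There you write the third point as $-u-f^2(v)-f^2(c)$, i.e.\ you use $f^2(v+c)=f^2(v)+f^2(c)$. Additivity of $f$ is not among the axioms in Definition~\ref{defnCO}; it is a feature of the ring examples in Lemma~\ref{leCOConstruction}. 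Theorem~\ref{thm:main} is stated for an arbitrary CO$(q)$, and the paper avoids additivity on purpose, e.g.\ via $c+f^2(b+c)=z-b+f^2(z)=-f(z)-b$ with $z=b+c$. As written, your argument only covers additive $f$. The repair is short. What you actually obtain is $f^2(v+c)=f^2(v)-c$, i.e.\ $(v+c)+f^2(v+c)=v+f^2(v)$. By Definition~\ref{defnCO}(5) this reads $f(v+c)=f(v)$, and since $f$ is a permutation, $c=0$. The identical computation handles Type IV.

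A second, smaller gap is Step one for Type IV. The two finite layers $\{x,-2x\}$ form an unordered pair. So besides the matching $x+y=x'$, $-2x+y=-2x'$ (which gives $3y=0$), you must consider the swapped matching $x+y=-2x'$, $-2x+y=x'$. This yields $y=x$ and $x'=-x$, and it is consistent on layers, since $\{x,-2x\}+x=\{2x,-x\}=\{x',-2x'\}$. It is excluded only because $x$ and $-x$ lie in different classes $\{1,2,4\}$ and $\{3,5,6\}$. A Type IV block with parameter $x'$ would therefore contain the other infinity, whereas $\tau_{c,y}$ fixes $\infty_1$ and $\infty_2$. You should invoke explicitly the rule attaching $\infty_1$ to $x\in\{1,2,4\}$ and $\infty_2$ to $x\in\{3,5,6\}$, rather than appeal to ``the same argument''. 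With these two repairs your proof is complete.
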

\begin{proof}
We distinguish the possible multiplicities of the second coordinates of the
finite points.

Suppose first that the triple consists of three finite points with distinct
second coordinates. The translation orbits of the three-subsets of $\FF{7}$
have the following five representatives:
\[
\{0,1,6\},\quad \{0,2,5\},\quad \{0,3,4\},\quad
\{1,2,4\},\quad \{3,5,6\}.
\]
The first three are the second-coordinate sets of the Type I blocks, and the
last two are those of the Type II blocks.

After the unique translation that puts the second coordinates into
$\{0,x,-x\}$, with $x\in\{1,2,4\}$, write the first coordinates as $r,s,t$
in the layers $0,x,-x$, respectively. There is a unique $c\in G$ for which
\[
t+c=(r+c)-(s+c),
\]
namely $c=r-s-t$. The translated triple is therefore a unique Type I block.

Now suppose that the second coordinates have been translated to
$\{1,2,4\}$. If the corresponding first coordinates are $r,s,t$, there is a
unique $c\in G$ such that
\[
(r+c)+(s+c)+(t+c)=0,
\]
because multiplication by $3$ is a bijection of $G$. Lemma~\ref{leCOequ1}(1)
then gives unique $u,v\in G$ such that the translated triple is
\[
\{(u+v,1),(f(u)+f^2(v),2),(f^2(u)+f(v),4)\}.
\]
For the second-coordinate set $\{3,5,6\}$, order the layers as
$6,5,3$ and apply the same argument with the second parameter replaced by
its negative. Thus the assertion holds when the three second coordinates
are distinct.

Suppose next that the triple consists of three finite points, exactly two of
which have the same second coordinate. There is a unique translation and a
unique $x\in\FF{7}^*$ that put its second-coordinate multiset into
\[
\{x,x,-2x\}.
\]
Write the first coordinates of the two points in the layer $x$ as $r,s$, and
that of the remaining point as $t$. By Lemma~\ref{leCOequ1}(2), there is a
unique pair $(u,b)\in G^+\times G$ such that
\[
\{r,s\}=\{u+b,f^2(u)+b\}.
\]
After translation by $c\in G$, the parameter $b$ is replaced by $b+c$. The
remaining point has the required first coordinate precisely when
\[
t+c=-u-f^2(b+c).
\]
The left-hand side can be solved uniquely for $c$, since, on putting
$z=b+c$, we have
\[
c+f^2(b+c)=z-b+f^2(z)=-f(z)-b,
\]
and $f$ is a permutation. This gives a unique Type III block.

Finally, suppose that the triple contains one point $\infty_i$ and two finite
points. Since the triple is not contained in any $H_x$, the two finite points
have distinct second coordinates. Put
\[
R_1=\{1,2,4\},\qquad R_2=\{3,5,6\}.
\]
There is a unique ordering of the two finite points and a unique translation
of their second coordinates that puts them into the layers $x,-2x$, with
$x\in R_i$. Write their first coordinates in these layers as $r,s$. We seek
$c,v\in G$ satisfying
\[
r+c=v,\qquad s+c=-f^2(v).
\]
Eliminating $c$ gives
\[
v+f^2(v)=r-s,
\]
or equivalently $-f(v)=r-s$. This has a unique solution $v$, and then $c$ is
also uniquely determined. The resulting block is of Type IV.

These cases exhaust all triples not contained in one of the sets $H_x$.
The uniqueness assertions above also give uniqueness of the representative
block. For completeness, the translating element is unique as well: the
stabilizer of a triple of finite points has order dividing $6$, while the
stabilizer of a triple containing one infinity has order dividing $2$; both
stabilizers are subgroups of the group $G\times\FF{7}$, whose order is
relatively prime to $6$. Hence both stabilizers are trivial.
\end{proof}

For $c\in G$ and $x\in\FF{7}$, define
\[
\mathcal B_{c,x}=\tau_{0,x}(\mathcal B_c).
\]

\begin{theorem}\label{thm:main}
If there exist an LKTS$(q+2)$ and a CO$(q)$, then there exists an
LKTS$(7q+2)$.
\end{theorem}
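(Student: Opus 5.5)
We will show that the $7q$ systems $\mathcal B_{c,x}$, $(c,x)\in G\times\FF{7}$, form the required LKTS$(7q+2)$ on $X$. Since $7q=(7q+2)-2$ is the correct number of systems, it suffices to prove two things: each $\mathcal B_{c,x}$ is a KTS$(7q+2)$, and every triple of $X$ lies in at least one of them. The first is immediate. By Theorem~\ref{thm:oneKTS}, $\mathcal B_c$ is a KTS with a resolution, and $\tau_{0,x}$ is a permutation of $X$, so $\mathcal B_{c,x}$ is again a resolvable STS. The count also lets us avoid a separate disjointness argument. The systems contain $7q\cdot\frac{(7q+2)(7q+1)}{6}=\binom{7q+2}{3}$ blocks in total. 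So if every triple is covered, then every triple is covered exactly once, and the systems are pairwise disjoint.

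To prove coverage, first note that $\tau_{0,x}\tau_{c,0}=\tau_{c,x}$. Hence the blocks of $\mathcal B_{c,x}$ are of two kinds:
\begin{itemize}
\item the blocks $\tau_{c,x}(B)$ with $B\in\mathcal E$;
\item the blocks $\tau_{0,x}(\ell(D))$ with $D\in\mathcal D_c$.
\end{itemize}
The second kind are exactly the blocks of $\ell_x(\mathcal D_c)$, where $\ell_x=\tau_{0,x}\circ\ell\colon Y\to H_x$ is the bijection sending $a\mapsto(a,x)$ and fixing the infinities. We then split the triples $T$ of $X$ into two cases.

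\emph{Case 1: $T$ is not contained in any $H_x$.} Lemma~\ref{lem:orbittransversal} gives a unique pair $(c,x)$ and a unique $B\in\mathcal E$ with $T=\tau_{c,x}(B)$. Hence $T\in\mathcal B_{c,x}$.

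\emph{Case 2: $T$ is contained in some $H_x$.} We must check that $T\subseteq H_x$ for a unique $x$, unless $T$ meets $H_x\cap H_{x'}=\{\infty_1,\infty_2\}$ in all its points. That cannot happen, since a triple has at least one finite point, and that point lies in exactly one layer. So $x$ is determined by the finite points of $T$. Then $\ell_x^{-1}(T)$ is a triple of $Y$. Because $\{\mathcal D_c\}$ is an LKTS$(q+2)$, this triple lies in $\mathcal D_c$ for exactly one $c$, and therefore $T\in\mathcal B_{c,x}$.

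The last thing to confirm is that the two block types never collide, which would spoil the counting. Blocks of $\mathcal E$ are never inside $H_0$, since $\mathcal E$ covers no pair of $H_0$ by Lemma~\ref{lePartialBlocks}. Their translates are therefore never inside any $H_x$, because $\tau_{c,x}$ maps $H_0$ onto $H_x$. The triples of Case 1 and Case 2 are thus disjoint families, and coverage of all triples is established.

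The main obstacle is already handled by Lemma~\ref{lem:orbittransversal}. What remains here is to verify the composition identity $\tau_{0,x}\tau_{c,0}=\tau_{c,x}$. This is needed because the $\mathcal E$-part of $\mathcal B_c$ is $\tau_{c,0}(\mathcal E)$, and the identity is what makes $(c,x)$ range over all of $G\times\FF{7}$ exactly as in that lemma.
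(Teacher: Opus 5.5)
Your proof is correct and follows the paper's argument. You split triples by whether they lie in some $H_x$, use the LKTS$(q+2)$ inside $H_x$ and Lemma~\ref{lem:orbittransversal} outside, with the external part of $\mathcal B_{c,x}$ equal to $\tau_{c,x}(\mathcal E)$. The paper gets disjointness from the uniqueness clauses. You instead get it from the count $7q\cdot\frac{(7q+2)(7q+1)}{6}=\binom{7q+2}{3}$. That makes your final ``no collision'' paragraph unnecessary, which is fortunate: as written it only shows that $\mathcal E$-blocks avoid $H_0$, whereas avoiding every $H_y$ needs the easy check on the layer patterns of Types I--IV.
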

\begin{proof}
By Theorem~\ref{thm:oneKTS}, every $\mathcal B_{c,x}$ is a KTS$(7q+2)$.
Its blocks inside $H_x$ are $\tau_{0,x}(\ell(\mathcal D_c))$, while its
remaining blocks form $\tau_{c,x}(\mathcal E)$. We show that
\[
\{\mathcal B_{c,x}:c\in G,\ x\in\FF{7}\}
\]
partitions the triples of $X$.

Let $T$ be a triple contained in some $H_x$. The value of $x$ is uniquely
determined by $T$, since two distinct sets $H_x$ intersect in only the two
points at infinity. The triple $\tau_{0,-x}(T)$ is contained in $H_0$, and its
inverse image under $\ell$ is a triple of $Y$. Since
$\{\mathcal D_c:c\in G\}$ is an LKTS$(q+2)$, this triple belongs to a unique
$\mathcal D_c$. Hence $T$ belongs to the internal part of a unique
$\mathcal B_{c,x}$.

If $T$ is not contained in any $H_x$, Lemma~\ref{lem:orbittransversal} gives
a unique block $B\in\mathcal E$ and a unique pair $(c,x)\in G\times\FF{7}$
such that $T=\tau_{c,x}(B)$. The external part of $\mathcal B_{c,x}$ is
exactly $\tau_{c,x}(\mathcal E)$, so again $T$ occurs in a unique member of
the family.

Thus the $7q=(7q+2)-2$ KTSs $\mathcal B_{c,x}$ are mutually disjoint and
partition all triples of $X$. They therefore form an LKTS$(7q+2)$.
\end{proof}

\begin{cor}\label{cor:recursive}
Let $q=\prod_i q_i$, where every $q_i$ is a prime power congruent to $1$
modulo $6$. If there exists an LKTS$(q+2)$, then there exists an
LKTS$(7q+2)$.
\end{cor}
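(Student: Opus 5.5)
This corollary is obtained by chaining the two main results already established. Given $q=\prod_i q_i$ with each $q_i$ a prime power congruent to $1$ modulo $6$, Corollary~\ref{cor:COexistence} supplies a CO$(q)$. Concretely, it is realized on the additive group of $R=\prod_i\FF{q_i}$, with $f$ given by multiplication by a componentwise primitive cube root of unity $\omega$. Together with the assumed LKTS$(q+2)$, the hypotheses of Theorem~\ref{thm:main} are then satisfied, and that theorem yields an LKTS$(7q+2)$.

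The only point to check explicitly is the standing assumption of Section~2 that $q\equiv 1\pmod 6$. Definition~\ref{defnCO} and the whole construction of Section~3 are stated for such $q$. For example, the count $(q-1)/6$ of $f$-orbit representatives in $G^+$ and the number $(q+1)/2$ of partial classes both rely on it. Since each $q_i\equiv1\pmod6$ and the residue $1$ is closed under multiplication modulo $6$, the product $q$ is also congruent to $1$ modulo $6$. This is consistent with the requirement that an LKTS$(q+2)$ exists, since $q+2\equiv3\pmod6$ is exactly the admissibility condition for a KTS.

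I do not expect any genuine obstacle. All the combinatorial work is contained in Lemma~\ref{lePartialBlocks}, Lemma~\ref{lem:orbittransversal} and Theorems~\ref{thm:oneKTS} and~\ref{thm:main}, and the algebraic work is contained in Lemma~\ref{leCOConstruction}. The proof is therefore a two-line invocation: first Corollary~\ref{cor:COexistence}, then Theorem~\ref{thm:main}. The congruence remark above is the only detail that needs to be written out.
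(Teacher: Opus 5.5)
Your proposal is correct and follows the paper's own proof, which simply combines Corollary~\ref{cor:COexistence} with Theorem~\ref{thm:main}. Your explicit check that $q\equiv1\pmod 6$, so that the standing hypothesis of Section~2 holds, is a valid detail that the paper leaves implicit.
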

\begin{proof}
Combine Corollary~\ref{cor:COexistence} with Theorem~\ref{thm:main}.
\end{proof}

\begin{cor}\label{cor:neworders}
There exist LKTSs of orders
\[
93,\quad177,\quad219,\quad261,\quad303.
\]
\end{cor}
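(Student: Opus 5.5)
We obtain each of the five orders as $7q+2$ and apply Corollary~\ref{cor:recursive}. Solving $7q+2=v$ gives
\[
93=7\cdot 13+2,\quad 177=7\cdot 25+2,\quad 219=7\cdot 31+2,\quad 261=7\cdot 37+2,\quad 303=7\cdot 43+2,
\]
so $q\in\{13,25,31,37,43\}$, and then $q+2\in\{15,27,33,39,45\}$. The proof has two parts: check that each $q$ has the required arithmetic form, and find an LKTS$(q+2)$ in each case.

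For the arithmetic condition, we need $q$ to be a product of prime powers congruent to $1$ modulo $6$. Here $13,31,37,43$ are primes and $25=5^2$ is a prime power, and reducing modulo $6$ gives $13\equiv 25\equiv 31\equiv 37\equiv 43\equiv 1\pmod 6$. So each $q$ is itself a single prime power of the right kind, and Corollary~\ref{cor:COexistence} supplies a CO$(q)$. (For $q=25$ this uses the field $\FF{25}$, not $\mathbb{Z}/25$.)

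The main obstacle is producing an LKTS$(q+2)$ for each $q$, because this input is not proved in the present paper. From Table~\ref{tab:direct}, LKTS$(15)$ (Denniston), LKTS$(33)$ (Schreiber) and LKTS$(39)$ (Zhou and Chang) are known directly. For LKTS$(27)$ and LKTS$(45)$ I would use known recursive results. LKTS$(27)$ follows from Denniston's tripling construction~\cite{MR535159} applied to LKTS$(9)$; alternatively, the Wang--Shi construction~\cite{MR3612438} gives LKTS$(q^n+2)$ from LKTS$(q+2)$, but with $q=5$ that only yields $5^n+2$, not $27$. LKTS$(45)$ comes from the product construction of Lei~\cite{MR1931492}, in the strengthened form of Zhang and Zhu~\cite{MR1893029,MR1948399}, taking $45=3\cdot 15$ from LKTS$(15)$ (or via tripling). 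If those references do not directly state these two orders, I would cite the survey of Chang and Zhou~\cite{MR3007142}, which records them as known. Once these five inputs are in hand, Corollary~\ref{cor:recursive} immediately gives LKTSs of orders $93,177,219,261,303$.
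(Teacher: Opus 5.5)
Your proposal is correct and follows the paper's proof: the same values $q\in\{13,25,31,37,43\}$ fed into Corollary~\ref{cor:recursive}, the direct LKTSs of orders $15,33,39$ from Table~\ref{tab:direct}, and Denniston's tripling from orders $9$ and $15$ for orders $27$ and $45$. The paper uses only tripling for $45$, not the Lei/Zhang--Zhu product. One small slip in your discarded aside: $5^2+2=27$, so the Wang--Shi construction with base $5$ does reach $27$ numerically; the actual obstruction is that it would need an LKTS$(7)$, which cannot exist since $7\not\equiv 3\pmod 6$.
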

\begin{proof}
Apply Corollary~\ref{cor:recursive} with
\[
q=13,\quad25,\quad31,\quad37,\quad43,
\]
respectively. The required input orders $q+2$ are $15,27,33,39$, and $45$.
The LKTSs of orders $15,33$, and $39$ are among the direct constructions
listed in Table~\ref{tab:direct}; those of orders $27$ and $45$ follow from
Denniston's tripling construction~\cite{MR535159}, starting from orders $9$
and $15$, respectively.
\end{proof}

\section{Concluding remarks}

We conclude by recording the effect of the known constructions on the small
orders. Combining Table~\ref{tab:direct}, Denniston's tripling construction,
the construction of Wang and Shi~\cite{MR3612438}, the family obtained by Xu
and Ji~\cite{XuJi2021}, and Corollary~\ref{cor:neworders}, an LKTS$(v)$ is
known to exist for every $v<200$ satisfying $v\equiv3\pmod6$, with the two
possible exceptions being $57$ and $87$.

These two orders are now the smallest remaining cases of the existence
problem. They are also resistant to the most immediate finite-field methods:
writing them as $q+2$ gives $q=55$ and $q=85$, neither of which is a prime
power or a product of prime powers congruent to $1$ modulo $6$.

A natural short-term objective is therefore to construct LKTS$(57)$ and
LKTS$(87)$ directly, or to find recursive constructions designed specifically
to reach them. For example, a construction of the form
\[
\operatorname{LKTS}(v)\longrightarrow\operatorname{LKTS}(4v-3)
\]
would give order $57$ from order $15$, while a construction of the form
\[
\operatorname{LKTS}(v)\longrightarrow\operatorname{LKTS}(4v+3)
\]
would give order $87$ from order $21$. Such recursions may require an
intermediate object weaker than an LKTS, in the same way that overlarge sets,
Kirkman frames, systems with holes, and Steiner quadruple systems with
resolvable derived designs have been used in earlier constructions. Direct
computer search, preferably with a prescribed automorphism group, also seems
realistic for these two isolated orders.

For the general existence problem, the existing literature suggests two
complementary routes. One is to continue the auxiliary-design program: obtain
sufficiently flexible large sets with holes, overlarge sets, and resolvable
derived designs, and combine them through $3$-wise balanced designs and
fundamental constructions. Several conditional reductions already show that a
large part of the problem can be concentrated into a finite collection of
small auxiliary designs; see, for example, Lei~\cite{Lei2004} and the survey
of Chang and Zhou~\cite{MR3007142}. The other possible route is an asymptotic
decomposition theorem. An LKTS$(v)$ may be regarded as a decomposition of the
complete $3$-uniform hypergraph into Steiner triple systems, together with a
second decomposition of every constituent system into perfect matchings.
Iterative absorption has proved powerful for many fixed hypergraph
decomposition problems~\cite{GlockKuhnLoOsthus2023}, and suggests a possible
starting point for such an asymptotic approach. However, the constituent
Steiner triple systems here grow with $v$, and preserving the nested
resolvability is a substantial additional constraint. An asymptotic theorem for all sufficiently large admissible orders,
combined with direct constructions for the remaining finite set, would give a
complete solution of the LKTS existence problem.

\bibliographystyle{siam}
\bibliography{designs,designs_additions_v2}

\end{document}